\author{
Jernej Azarija \\
Institute of Mathematics, Physics and Mechanics \\
Jadranska 19, 1000 Ljubljana, Slovenia \\
jernej.azarija@gmail.com}
\title{Sharp upper and lower bounds on the number of spanning trees in Cartesian product of graphs}
\def\trimx#1{\ignorespaces#1\unskip} 
\newcommand{\bibitemart}[7]{\bibitem{#1} \trimx{#2}, {\it \trimx{#3}}\/, \trimx{#4} {\bf \trimx{#5}} (\trimx{#6}) \trimx{#7}.}
\newcommand{\bibitembook}[5]{\bibitem{#1} \trimx{#2}, \trimx{#3} (\trimx{#4}, \trimx{#5}).}
\newtheorem{theorem}{Theorem}
\newtheorem{cor}{Corollary}
\begin{document}
\maketitle
\begin{abstract}
Let $G_1$ and $G_2$ be simple graphs and let $n_1 = |V(G_1)|$, $m_1 = |E(G_1)|$, $n_2 = |V(G_2)|$ and $m_2 = |E(G_2)|.$ In this paper we derive sharp upper and lower bounds for the number of spanning trees $\tau$ in the Cartesian product $G_1 \square G_2$ of $G_1$ and $G_2$. We show that: $$ \tau(G_1 \square G_2) \geq \frac{2^{(n_1-1)(n_2-1)}}{n_1n_2} (\tau(G_1) n_1)^{\frac{n_2+1}{2}} (\tau(G_2)n_2)^{\frac{n_1+1}{2}}$$ and $$\tau(G_1 \square G_2) \leq \tau(G_1)\tau(G_2) \left[ \frac{2m_1}{n_1-1} + \frac{2m_2}{n_2-1} \right]^{(n_1-1)(n_2-1)}.$$ We also characterize the graphs for which equality holds. As a by-product we derive a formula for the number of spanning trees in $K_{n_1} \square K_{n_2}$ which turns out to be $n_{1}^{n_1-2}n_2^{n_2-2}(n_1+n_2)^{(n_1-1)(n_2-1)}.$
\end{abstract}

\section{Introduction}
An important invariant in graph theory is $\tau(G)$, the number of spanning trees of a graph $G$. The first result related to $\tau(G)$ dates back to 1847 and is attributed to Kirchhoff \cite{Kirchhoff}. In his celebrated theorem he has shown that the number of spanning trees of a graph $G$ is closely related to the cofactor of a special matrix (the {\em Laplacian matrix}) that can be obtained after substracting the {\em adjacency matrix} from the respective {\em degree matrix} (a diagonal matrix with vertex degrees on the diagonals). If by $Q(G)$ we denote the Laplacian matrix of a graph $G$ of order $n$ with eigenvalues $0 = \lambda_1 \leq \cdots \leq \lambda_n$, then a corollary of Kirchhoff's theorem can be stated as \begin{equation}\label{form} \tau(G) = \frac{1}{n} \lambda_2 \cdots \lambda_n. \end{equation} For example, as the eigenvalue $n$ of $Q(K_n)$ has multiplicity $(n-1)$, it follows that \begin{equation}\label{Cayley} \tau(K_n) = n^{n-2}. \end{equation} Equation (\ref{Cayley}) is also refered to as {\em Cayley formula} as a tribute to its discoverer Arthur Cayley \cite{Cayley}. For a survey of known results related to the Laplacian spectrum of graphs we refer the reader to \cite{Mohar}. 

Since the result of Cayley, many interesting identites for the number of spanning trees for various classes of graphs have been derived. For example, Bogdanowicz \cite{Bog} showed that the number of spanning trees of the $n$-fan $F_{n+1}$ equals to $f_{2n}$ where $f_n$ is the $n$'th {\em Fibonacci number}.  A similar result relating the number of spanning trees of the wheel graph to {\em Lucas numbers} is also known \cite{IEE}. Counting the number of spanning trees is not only an area that is rich with surprising identities but also holds a fundamential role in other scientific areas such as physics \cite{Phy3,Phy1} networking theory \cite{Net1} and also finds applications in the study of various electrical networks \cite{El}. Since {\em graph products} (as defined in \cite{Klavzar}) form a basis for many network topologies it is natural to study the function $\tau$ in relation with various graph products.

In this paper we study the number of spanning trees in the Cartesian product of graphs. For simple graphs $G_1$ and $G_2$, the Cartesian product $G_1 \square G_2$ is defined as the graph with vertex set $V(G_1) \times V(G_2)$ such that two vertices $(u,u')$ and $(v,v')$ are adjacent if and only if either $u = v$ and $u'$ is adjacent to $v'$ in $G_2$, or $u' = v'$ and $u$ is adjacent to $v$ in $G_1$. 

In what follows $G_1$ and $G_2$ will denote simple graphs of order $n_1$ and $n_2$ such that $m_1 = |E(G_1)|$ and $m_2 = |E(G_2)|.$ Moreover, we will denote by $\lambda_1, \ldots, \lambda_{n_1}$ and $\mu_1, \ldots, \mu_{n_2}$ the eigenvalues of $Q(G_1)$ and $Q(G_2)$ respectively. Using this notation, we can state the well know (see \cite{Mohar} for a survey of results related to the Laplacian spectrum) fact relating the eigenvalues of $G_1$ and $G_2$ to the eigenvalues of $G_1\square G_2$ which are $$ \lambda_i + \mu_j \quad \hbox{for} \quad i = 1, \ldots, n_1 \quad \hbox{and} \quad j = 1, \ldots, n_2.$$ Applying the later equality to identity (\ref{form}) and using the fact that $\lambda_1 = \mu_1 = 0$ one obtains the following formula for the number of spanning trees for the Cartesian product of $G_1$ and $G_2$: \begin{equation}\label{SPProd} \tau(G_1\square G_2) = \tau(G_1)\tau(G_2)\prod_{i = 2}^{n_1} \prod_{j=2}^{n_2} (\lambda_i + \mu_j). \end{equation} 

\section{Upper and lower bounds for $\tau(G_1 \square G_2)$}

We are going to simplify equation (\ref{SPProd}) as to obtain upper and lower bounds for $\tau(G_1 \square G_2).$ Furthermore we will characterize the graphs for which equality holds and derive a formula for the number of spanning trees of the {\em Rook's graph} $K_{n_1} \square K_{n_2}.$

\begin{theorem}\label{T1}
$\tau(G_1 \square G_2) \geq \frac{2^{(n_1-1)(n_2-1)}}{n_1n_2} (\tau(G_1) n_1)^{\frac{n_2+1}{2}} (\tau(G_2)n_2)^{\frac{n_1+1}{2}}$ where equality holds if and only if $G_1$ or $G_2$ is not connected or $n_1 = n_2$ and $G_1 \simeq G_2 \simeq K_{n_1}.$
\end{theorem}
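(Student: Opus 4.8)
The starting point is the exact product formula~(\ref{SPProd}). Since every Laplacian eigenvalue is nonnegative, the plan is to bound each factor from below by the arithmetic--geometric mean inequality,
$$\lambda_i + \mu_j \geq 2\sqrt{\lambda_i\mu_j},$$
and then multiply these $(n_1-1)(n_2-1)$ inequalities together over $i\in\{2,\dots,n_1\}$ and $j\in\{2,\dots,n_2\}$.

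Carrying out the product, the constant $2$ contributes a factor $2^{(n_1-1)(n_2-1)}$, while the geometric-mean part separates as
$$\prod_{i=2}^{n_1}\prod_{j=2}^{n_2}\sqrt{\lambda_i\mu_j} = \Big(\prod_{i=2}^{n_1}\lambda_i\Big)^{\frac{n_2-1}{2}}\Big(\prod_{j=2}^{n_2}\mu_j\Big)^{\frac{n_1-1}{2}}.$$
By Kirchhoff's formula~(\ref{form}) we have $\prod_{i=2}^{n_1}\lambda_i = n_1\tau(G_1)$ and $\prod_{j=2}^{n_2}\mu_j = n_2\tau(G_2)$. Substituting this into~(\ref{SPProd}) and absorbing the leading $\tau(G_1)\tau(G_2)$ into the powers (using $1+\frac{n_2-1}{2}=\frac{n_2+1}{2}$, and pulling out one factor each of $n_1$ and $n_2$ to produce the denominator $n_1n_2$) yields exactly the claimed lower bound. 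I expect this computation to be entirely routine.

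The interesting part is the equality discussion. If $G_1$ or $G_2$ is disconnected then $\tau(G_1)\tau(G_2)=0$, and since $G_1\square G_2$ is connected only when both factors are, both sides vanish and equality holds trivially. So assume both factors are connected, whence $\lambda_i>0$ for $i\geq 2$ and $\mu_j>0$ for $j\geq 2$. Equality in the product forces equality in every individual AM--GM step, i.e.\ $\lambda_i=\mu_j$ for all $i\in\{2,\dots,n_1\}$ and $j\in\{2,\dots,n_2\}$. Fixing one index and varying the other shows that all of these eigenvalues coincide with a single value $c$, so each $G_k$ has Laplacian spectrum consisting of $0$ together with $c$ of multiplicity $n_k-1$.

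The main obstacle is the final spectral step: I must show that a connected graph whose only distinct Laplacian eigenvalues are $0$ and $c$ is necessarily complete. The cleanest route is the standard bound that the number of distinct Laplacian eigenvalues of a connected graph is at least its diameter plus one; having only two distinct eigenvalues then forces diameter $1$, so the graph is $K_{n_k}$, for which the nonzero eigenvalue equals $n_k$. Therefore $c=n_1=n_2$, which gives $n_1=n_2$ and $G_1\simeq G_2\simeq K_{n_1}$, as required. (The degenerate cases $n_1=1$ or $n_2=1$, where the double product is empty, should be checked separately or excluded by the standing assumption that the graphs have at least two vertices.)
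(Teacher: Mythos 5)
Your proposal is correct and follows essentially the same route as the paper's proof: AM--GM applied to each factor $\lambda_i+\mu_j$ of equation (\ref{SPProd}), separation of the resulting product and conversion via Kirchhoff's formula (\ref{form}) into $(n_1\tau(G_1))^{\frac{n_2-1}{2}}(n_2\tau(G_2))^{\frac{n_1-1}{2}}$, and the identical equality analysis forcing all nonzero Laplacian eigenvalues of both graphs to coincide. The only differences are refinements, not a different approach: you supply a justification (the diameter bound on the number of distinct Laplacian eigenvalues) for the spectral fact that the paper simply asserts, and you correctly flag the degenerate case $n_1=1$ or $n_2=1$, which the paper's equality characterization silently overlooks.
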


\begin{proof}
Consider the expression: $$ \prod_{i=2}^{n_1} \prod_{j = 2}^{n_2} (\lambda_i+\mu_j).$$ By the inequality of arithmetic and geometric means $\lambda_i + \mu_j \geq 2\sqrt{\lambda_i \mu_j}$ for every $i,j,$ it therefore follows that $$ \prod_{i=2}^{n_1} \prod_{j = 2}^{n_2} (\lambda_i+\mu_j) \geq \prod_{i=2}^{n_1} \prod_{j = 2}^{n_2} 2\sqrt{\lambda_i \mu_j} = 2^{(n_1-1)(n_2-1)} \prod_{i=2}^{n_1} \prod_{j = 2}^{n_2} \sqrt{\lambda_i \mu_j}.$$ The last expression can also be writen as: $$2^{(n_1-1)(n_2-1)} \prod_{i=2}^{n_1} \sqrt{\lambda_i^{n_2-1}} \prod_{j=2}^{n_2} \sqrt{\mu_j^{n_1-1}}. $$ We now multiply and divide the last expression by $\sqrt{{n_1}^{n_2-1}{n_2}^{n_2-1}}$ and obtain: $$2^{(n_1-1)(n_2-1)}\sqrt{{n_1}^{n_2-1} {n_2}^{n_1-1}} \frac{ \prod_{i=2}^{n_1} \sqrt{\lambda_i^{n_2-1}}}{\sqrt{{n_1}^{n_2-1}}} \frac{\prod_{j=2}^{n_2} \sqrt{\mu_j^{n_1-1}}}{\sqrt{{n_2}^{n_1-1}}}$$ which, according to (\ref{form}), equals  $$2^{(n_1-1)(n_2-1)} (\tau(G_1)n_1)^{\frac{n_2-1}{2}} (\tau(G_2)n_2)^{\frac{n_1-1}{2}}\,. $$ The stated inequality now follows after combining the derived result with equation (\ref{SPProd}). 

We now examine the cases in which equality holds. If $G_1$ or $G_2$ is not connected, then equality clearly holds as $\tau(G_1\square G_2) = 0.$ Therefore, let us assume $G_1$ and $G_2$ are connected.  As we derived our inequality using the inequality of arithmetic and geometric means it follows that equality holds if and only if $\lambda_i = \mu_j$ for every $i = 2, \ldots, n_1$ and $j = 2, \ldots, n_2.$ The later holds if and only if $$\lambda_2 = \cdots = \lambda_{n_1} = \mu_2 = \cdots = \mu_{n_2},$$ which means that $Q(G_1)$ and $Q(G_2)$ have eigenvalues of multiplicity $n_1-1$ and $n_2-1$, respectively. As the only graph of order $k$ whose Laplacian matrix has an eigenvalue of multiplicity $k-1$ is $K_k$,  it follows, that $n_1 = n_2$ and thus $G_1 \simeq K_{n_1} \simeq  G_2.$
\end{proof}

In the proof of Theorem \ref{T1} we applied the inequality of arithmetic and geometric means to each summand of (\ref{SPProd}) individually. Observe that the same inequality can be applied to the factors of equation (\ref{SPProd}). In the next theorem we use this observation and the fact that $ \sum_{i=2}^{n_1} \lambda_i = 2m_1$ and $\sum_{i=2}^{n_2} \mu_i = 2m_2$ in order to derive an upper bound for $\tau(G_1 \square G_2)$.

\begin{theorem}\label{T2}
$ \tau(G_1\square G_2) \leq \tau(G_1)\tau(G_2)\left[ \frac{2m_1}{n_1-1} + \frac{2m_2}{n_2-1} \right]^{(n_1-1)(n_2-1)},$ where equality holds if and only if $G_1$ or $G_2$ is not connected or $G_1 \simeq K_{n_1}$ and $G_2 \simeq K_{n_2}.$
\end{theorem}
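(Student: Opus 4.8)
The plan is to start from the exact formula $\tau(G_1\square G_2) = \tau(G_1)\tau(G_2)\prod_{i=2}^{n_1}\prod_{j=2}^{n_2}(\lambda_i+\mu_j)$ and bound the double product from above. Whereas Theorem~\ref{T1} applied the AM--GM inequality \emph{inside} each factor $\lambda_i+\mu_j$, the hint in the paragraph preceding the statement makes clear that the right move here is to apply AM--GM \emph{across} the $(n_1-1)(n_2-1)$ factors of the product. That is, I would invoke the inequality of geometric and arithmetic means in the form
\begin{equation*}
\prod_{i=2}^{n_1}\prod_{j=2}^{n_2}(\lambda_i+\mu_j) \leq \left[\frac{1}{(n_1-1)(n_2-1)}\sum_{i=2}^{n_1}\sum_{j=2}^{n_2}(\lambda_i+\mu_j)\right]^{(n_1-1)(n_2-1)},
\end{equation*}
which bounds the geometric mean of the nonnegative factors by their arithmetic mean and then raises to the power equal to the number of factors.

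The next step is to evaluate the inner double sum explicitly. Expanding, $\sum_{i=2}^{n_1}\sum_{j=2}^{n_2}(\lambda_i+\mu_j) = (n_2-1)\sum_{i=2}^{n_1}\lambda_i + (n_1-1)\sum_{j=2}^{n_2}\mu_j$, and here I would substitute the two trace identities recorded just before the theorem, namely $\sum_{i=2}^{n_1}\lambda_i = 2m_1$ and $\sum_{j=2}^{n_2}\mu_j = 2m_2$ (these hold because $\lambda_1=\mu_1=0$ and the trace of the Laplacian equals twice the number of edges). Dividing by $(n_1-1)(n_2-1)$ then collapses the bracketed arithmetic mean to exactly $\frac{2m_1}{n_1-1}+\frac{2m_2}{n_2-1}$, and multiplying back by $\tau(G_1)\tau(G_2)$ gives the claimed upper bound.

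For the equality analysis, the disconnected case is immediate since then $\tau(G_1\square G_2)=0$ and the right-hand side is a product with a nonnegative bracket, so I would dispose of it first and thereafter assume both factors connected. The substantive direction is the characterization when both graphs are connected: AM--GM attains equality exactly when all the factors $\lambda_i+\mu_j$ are equal, so I must show this forces $G_1\simeq K_{n_1}$ and $G_2\simeq K_{n_2}$. The hard part will be the converse-free deduction from ``all $\lambda_i+\mu_j$ equal'' to the complete-graph conclusion: from equality of $\lambda_i+\mu_j$ over all admissible $i,j$ one concludes that $\lambda_2=\cdots=\lambda_{n_1}$ and $\mu_2=\cdots=\mu_{n_2}$ (varying one index while fixing the other), so each Laplacian has a single nonzero eigenvalue of multiplicity $n_1-1$, respectively $n_2-1$; I would then cite the same structural fact used in Theorem~\ref{T1}, that the only connected graph of order $k$ whose Laplacian has an eigenvalue of multiplicity $k-1$ is $K_k$. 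Note that, unlike in Theorem~\ref{T1}, there is no forced constraint $\lambda=\mu$ coupling the two spectra here, which is precisely why the equality condition does \emph{not} require $n_1=n_2$; checking that $K_{n_1}\square K_{n_2}$ genuinely saturates the bound for \emph{all} $n_1,n_2$ (and extracting the Rook's-graph spanning-tree count as a corollary) is the consistency check I would run last.
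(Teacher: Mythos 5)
Your proposal matches the paper's proof essentially step for step: AM--GM applied across the $(n_1-1)(n_2-1)$ factors of the product in (\ref{SPProd}), evaluation of the double sum via the trace identities $\sum_{i=2}^{n_1}\lambda_i = 2m_1$ and $\sum_{j=2}^{n_2}\mu_j = 2m_2$, and the equality analysis via constant nonzero Laplacian spectra forcing complete graphs. The argument is correct and no meaningful difference from the paper's route exists.
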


\begin{proof}
As observed, we can bound equation (\ref{SPProd}) by applying the inequality of geometric and arithmetic means on its factors. We then obtain $$ \tau(G_1\square G_2) = \tau(G_1)\tau(G_2)\prod_{i = 2}^{n_1} \prod_{j=2}^{n_2} (\lambda_i + \mu_j)  \leq \tau(G_1)\tau(G_2)\left[\frac{\sum_{i=2}^{n_1} \sum_{j=2}^{n_2} (\lambda_i + \mu_j)}{(n_1-1)(n_2-1)}\right]^{(n_1-1)(n_2-1)}, $$ which we further simplify to $$ \tau(G_1)\tau(G_2) \left[ \frac{(n_2-1)\sum_{i=2}^{n_1} \lambda_i + (n_1-1)\sum_{j=2}^{n_2} \mu_j}{(n_1-1)(n_2-1)} \right]^{(n_1-1)(n_2-1)}. $$ Applying the identity for the summation of the eigenvalues of the Laplacian matrix we obtain $$ \tau(G_1)\tau(G_2)\left[\frac{2m_1}{n_1-1}+\frac{2m_2}{n_2-1}\right]^{(n_1-1)(n_2-1)},$$ which is what we wanted to show. 

Observe now, that if $G_1$ or $G_2$ is not connected, equality in the stated bound clearly holds. Thus, let us assume $G_1$ and $G_2$ are connected. Equality will then hold if and only if $$ \lambda_i + \mu_j = \lambda_{i'}+\mu_{j'} \quad \hbox{ for} \quad i,i' = 1,\ldots,n_1 \quad \hbox{and} \quad j,j' = 1,\ldots,n_2.$$ The later holding if and only if $$\lambda_2 = \cdots = \lambda_{n_1} \quad \hbox{and} \quad \mu_2 = \cdots = \mu_{n_2},$$ which means $G_1 \simeq K_{n_1}$ and $G_2 \simeq K_{n_2}$ as these are the only graphs of order $n_1$ and $n_2$ having eigenvalues of multiplicity $n_1-1$ and $n_2-1$, respectively.
\end{proof}

The statements of Theorems \ref{T1} and \ref{T2} simplify substantialy if $G_1$ and $G_2$ are trees. In this case we can write the implications of Theorem \ref{T1} and Theorem \ref{T2} as the following corollary.

\begin{cor}
If $G_1$ and $G_2$ are trees of order $n_1\geq 3$ and $n_2 \geq 3$ respectively, then $$ 2^{(n_1-1)(n_2-1)}{n_1}^{\frac{n_2-1}{2}} {n_2}^{\frac{n_1-1}{2}} < \tau(G_1 \square G_2) < 2^{2(n_1-1)(n_2-1)}.$$
\end{cor}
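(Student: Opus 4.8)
The plan is to specialize Theorems~\ref{T1} and~\ref{T2} to the case of trees, where all the relevant parameters take particularly simple values, and then to upgrade the resulting non-strict inequalities to strict ones using the equality characterizations already in hand. First I would record the two elementary facts that drive everything: every tree $T$ of order $n$ is its own unique spanning tree, so $\tau(T) = 1$, and it has $n-1$ edges. Applied to our hypotheses, this gives $\tau(G_1) = \tau(G_2) = 1$, together with $m_1 = n_1 - 1$ and $m_2 = n_2 - 1$.

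For the lower bound I would substitute these values directly into Theorem~\ref{T1}. The factors $(\tau(G_1) n_1)^{(n_2+1)/2}$ and $(\tau(G_2) n_2)^{(n_1+1)/2}$ collapse to $n_1^{(n_2+1)/2}$ and $n_2^{(n_1+1)/2}$, and absorbing the prefactor $1/(n_1 n_2)$ lowers each exponent by one, producing exactly $2^{(n_1-1)(n_2-1)} n_1^{(n_2-1)/2} n_2^{(n_1-1)/2}$. For the upper bound I would substitute $m_1 = n_1 - 1$ and $m_2 = n_2 - 1$ into the bracketed quantity of Theorem~\ref{T2}; the two summands simplify to $2 + 2 = 4$, so the bound becomes $4^{(n_1-1)(n_2-1)} = 2^{2(n_1-1)(n_2-1)}$. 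These are precisely the two sides displayed in the corollary, so the only content beyond a routine substitution lies in the strictness.

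The strictness is the one step that genuinely requires the hypothesis $n_1, n_2 \geq 3$, and I expect it to be the main (though still modest) obstacle. Since $G_1$ and $G_2$ are trees they are connected, so the first equality case in each theorem is vacuous; equality would therefore force $G_1 \simeq K_{n_1}$ and $G_2 \simeq K_{n_2}$ (and, in the case of Theorem~\ref{T1}, additionally $n_1 = n_2$). But $K_n$ fails to be a tree as soon as $n \geq 3$, since it then contains a cycle, whereas we have assumed $n_1, n_2 \geq 3$. Hence neither $G_1$ nor $G_2$ can be complete, the equality cases of both theorems are excluded, and both bounds become strict. This completes the argument, and it is exactly here that dropping the cases $n_i \leq 2$ is essential: for $n_i = 2$ the tree $K_2$ \emph{is} complete and equality could occur.
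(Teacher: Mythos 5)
Your proposal is correct and follows exactly the route the paper intends: substitute $\tau(G_i)=1$ and $m_i=n_i-1$ into Theorems~\ref{T1} and~\ref{T2}, then obtain strictness from the equality characterizations, since a connected tree on $n_i\geq 3$ vertices is not complete. The algebraic simplifications you describe are all accurate.
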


As we saw in Theorem \ref{T2}, the derived bound for $\tau(G_1 \square G_2)$ is tight whenever $G_1 \simeq K_{n_1}$ and $G_2 \simeq K_{n_2}.$ This, in combination with equation (\ref{Cayley}), readily gives an exact formula for the number of spanning trees of $K_{n_1} \square K_{n_2}$:

\begin{cor}
$\tau(K_{n_1} \square K_{n_2}) = {n_1}^{n_1-2}{n_2}^{n_2-2}(n_1+n_2)^{(n_1-1)(n_2-1)}.$
\end{cor}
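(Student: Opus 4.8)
The plan is to specialize Theorem~\ref{T2} to the case $G_1 \simeq K_{n_1}$ and $G_2 \simeq K_{n_2}$, where the equality characterization of that theorem applies exactly. Since both factors are complete graphs, the upper bound is attained and becomes an identity, so all that remains is to evaluate the three ingredients of the bound explicitly for complete graphs.

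First I would record that, by Cayley's formula (\ref{Cayley}), $\tau(K_{n_1}) = n_1^{n_1-2}$ and $\tau(K_{n_2}) = n_2^{n_2-2}$. Next I would simplify the bracketed term: because $K_{n_i}$ has $m_i = \binom{n_i}{2} = \frac{n_i(n_i-1)}{2}$ edges, we have $\frac{2m_i}{n_i-1} = n_i$ for $i \in \{1,2\}$, so the bracket collapses to $n_1 + n_2$. Substituting these values into the equality form of Theorem~\ref{T2} then yields
$$\tau(K_{n_1}\square K_{n_2}) = n_1^{n_1-2}\, n_2^{n_2-2}\,(n_1+n_2)^{(n_1-1)(n_2-1)},$$
which is precisely the claimed formula.

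As a cross-check, I could bypass Theorem~\ref{T2} entirely and argue directly from (\ref{SPProd}): the nonzero Laplacian eigenvalues of $K_{n_i}$ are all equal to $n_i$ (each with multiplicity $n_i-1$), so every one of the $(n_1-1)(n_2-1)$ factors $\lambda_i + \mu_j$ equals $n_1 + n_2$, and the double product reproduces the same expression. There is essentially no real obstacle in this corollary; the only points requiring care are the routine edge-count computation $\frac{2m_i}{n_i-1} = n_i$ and verifying that the equality hypothesis of Theorem~\ref{T2}—that both factors are complete—is genuinely met, so that the inequality is attained rather than strict.
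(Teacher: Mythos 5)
Your proposal matches the paper's own argument: the paper likewise obtains the formula by combining the equality case of Theorem~\ref{T2} (attained precisely when both factors are complete graphs) with Cayley's formula (\ref{Cayley}) and the computation $\frac{2m_i}{n_i-1} = n_i$. Your direct cross-check via (\ref{SPProd}) is a nice sanity check but adds nothing beyond the paper's route.
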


Observe, that the same argument as used in Theorems \ref{T1} and \ref{T2} could be applied to the other standard graph products provided that a similar characterisation of their Laplacian spectrum is known. At present no result of this type was known to the author, hence we leave it as future work to investigate upper and lower bounds for the other graph products.

\section{Acknowledgements}

The author is thankful to Sandi Klav\v{z}ar for constructive discussions related to the problem.

\end{document}